\tikzstyle{legend_general}=[rectangle, rounded corners, thin,
\renewcommand{\epsilon}{\varepsilon}
\newtheorem{theorem}{Theorem}[section]
\newtheorem{lemma}[theorem]{Lemma}
\theoremstyle{definition}
\author{Peter Bradshaw}
\address{Simon Fraser University, Burnaby, BC, Canada}
\email{pabradsh@sfu.ca}
\title{Cooperative colorings of forests}
\begin{document}
\maketitle
\begin{abstract}
Given a family $\mathcal G$ of graphs spanning a common vertex set $V$, a \emph{cooperative coloring} of $\mathcal G$ is a collection of one independent set from each graph $G \in \mathcal G$ such that the union of these independent sets equals $V$. We prove that for large $d$, there exists a family $\mathcal G$ of $(1+o(1)) \frac{\log d}{\log \log d}$ forests of maximum degree $d$ that admits no cooperative coloring, which significantly improves a result of Aharoni, Berger, Chudnovsky, Havet, and Jiang (Electronic Journal of Combinatorics, 2020). Our family $\mathcal G$ consists entirely of star forests, and we show that this value for $|\mathcal G|$ is asymptotically best possible in the case that $\mathcal G$ is a family of star forests.
\end{abstract}
\section{Introduction}
In this paper, all graphs and vertex sets that we consider are finite.
Given a family $\mathcal G = \{G_1, \dots, G_k\}$ of graphs that span a common vertex set $V$, a \emph{cooperative coloring} of $\mathcal G$ is defined as a family of sets $R_1, \dots, R_k \subseteq V$ such that for each $1 \leq i \leq k$, $R_i$ is an independent set of $G_i$, and $V = \bigcup _{i = 1}^k R_i$. If each graph $G_i \in \mathcal G$ is equal to a single graph $G$, then the problem of finding a cooperative coloring of $\mathcal G$ is equivalent to the problem of finding a proper $k$-coloring of $G$. Hence, in the cooperative coloring problem, the indices of the graphs in $\mathcal G$ resemble colors used in the traditional graph coloring problem, and we often refer to the indices of graphs in our family $\mathcal G$ as colors.

The cooperative coloring problem is a specific kind of
\emph{independent transversal} problem, which is defined as follows. Given a graph $H$ with a vertex partition $V_1 \cup \dots \cup V_r$, we say that an independent transversal on $H$ is an independent set $I$ in $H$ such that $I$ contains exactly one vertex from each part $V_i$. Given a graph family $\mathcal G = \{G_1, \dots, G_k\}$ on a common vertex set $V$, we can transform the cooperative coloring problem on $\mathcal G$ into an independent transversal problem as follows. We define a graph $H$ with a vertex set $V(H) = V \times [k]$, an edge $(u,i)(v,i)$ for each edge $uv \in E(G_i)$, and with a vertex partition consisting of a part $\{v\} \times [k]$ for each vertex $v \in V$. 
Then, given such a graph $H$ constructed from $\mathcal G$, an independent transversal on $H$ with respect to the parts described above gives a cooperative coloring of $\mathcal G$, and any cooperative coloring of $\mathcal G$ can be transformed into an independent transversal on $H$ after possibly deleting extra vertices to make the independent sets $R_i$ disjoint. 
Certain other graph coloring problems can also be naturally described as independent transversal problems. For example, DP-coloring (also called correspondence coloring) is a recent generalization of list coloring invented by Dvo\v{r}\'ak and Postle \cite{DP}. One way of defining the DP-chromatic number $\chi_{DP}(G)$ of a graph $G$ is with the following statement: $\chi_{DP}(G) \leq k$ if and only if every graph $H$ forming a $k$-sheeted covering space of $G$ with a projection $p:H\rightarrow G$ has an independent transversal with respect to the partition $\bigcup_{v \in V(G)} p^{-1}(v)$ of $V(H)$.

The notion of a cooperative coloring can be naturally generalized to the notion of a \emph{cooperative list coloring}, defined as follows. Consider a graph family $\mathcal G = \{G_1, \dots, G_k\}$ in which each graph $G_i$ has a vertex set $V_i$ that may or may not share vertices with the vertex sets $V_j$ of the other graphs $G_j \in \mathcal G$. We write $V = V_1 \cup \dots \cup V_k$. Then, we say that a \emph{cooperative list coloring} of $\mathcal G$ is a family of vertex subsets $R_1, \dots, R_k$ such that for each value $1 \leq i \leq k$, it holds that $R_i \subseteq V_i$ and $R_i$ is an independent set of $G_i$, and such that $V = \bigcup _{i = 1}^k R_i$. 
Every list coloring problem on a graph $G$ with a list function $L$ can be transformed into a cooperative list coloring problem as follows. For each color $c \in \bigcup_{v \in V(G)} L(v)$, we define the graph $G_c$ to be the subgraph of $G$ induced by those vertices $v \in V(G)$ for which $c \in L(v)$. Then, finding a list coloring on $G$ is equivalent to finding a cooperative list coloring on the family $\mathcal G = \{G_c: c \in \bigcup_{v \in V(G)} L(v) \}$.
The cooperative list coloring problem can also be transformed into an independent transversal problem in a similar way to the cooperative coloring problem. 

One natural question in graph coloring asks for an upper bound on the number of colors needed to color a graph in terms of its maximum degree. Similarly, in the setting of cooperative colorings, we may ask how many graphs of maximum degree $d$ are necessary in a graph family $\mathcal G$ on a common vertex set in order to guarantee the existence of a cooperative coloring. A simple argument using the Lov\'asz Local Lemma shows that if each graph in $\mathcal G$ is of maximum degree at most $d$, then $\mathcal G$ has a cooperative coloring whenever $|\mathcal G| \geq 2ed$. A more involved argument of Haxell \cite{Haxell} implies that $\mathcal G$ is guaranteed a cooperative coloring whenever $|\mathcal G| \geq 2d$. When $d$ is large, Loh and Sudakov \cite{LohSudakov} have shown that a lower bound of the form $|\mathcal G| \geq (1 + o(1))d$ also guarantees the existence of a cooperative coloring on $\mathcal G$. On the other hand, Aharoni, Holzman, Howard, and Spr\"{u}ssel \cite{AHHS} have constructed families containing $d+1$ graphs of maximum degree $d$ spanning a common vertex set that do not admit a cooperative coloring.

For a graph class $\mathcal H$, Aharoni, Berger, Chudnovsky, Havet, and Jiang \cite{CCtrees} defined the parameter $m_{\mathcal H}(d)$ to be the minimum value $m$ for which the following holds: If $\mathcal G$ is a family of at least $m$ graphs of $\mathcal H$ of maximum degree at most $d$ that span a common vertex set, then $\mathcal G$ must have a cooperative coloring. When $\mathcal H$ is the family of all graphs, they write $m(d) = m_{\mathcal H}(d)$. The discussion above implies that $m(d) \leq 2d$ for all values $d \geq 1$, and $m(d) \leq d + o(d)$ asymptotically when $d$ is large. Note that all asymptotics in this paper will be with respect to the parameter $d$, which will always be an upper bound for the maximum degree of each graph in a given graph class.

In a similar fashion to Aharoni, Berger, Chudnovsky, Havet, and Jiang, we will define the parameter $\ell_{\mathcal H}(d)$ for a graph class $\mathcal H$ as follows. 
We say $\ell_{\mathcal H}(d)$ is the minimum value $\ell$ such that if $\mathcal G$ is a family of graphs from $\mathcal H$ of maximum degree at most $d$ whose vertex sets are subsets of a universal vertex set $V$, and if each vertex $v \in V$ belongs to at least $\ell$ graphs in $\mathcal G$, then $\mathcal G$ has a cooperative list coloring. It is straightforward to show that for any graph class $\mathcal H$ and for any value $d$, $m_{\mathcal H}(d) \leq \ell_{\mathcal H}(d)$. When $\mathcal H$ is the class of all graphs, we write $\ell(d) = \ell_{\mathcal H}(d)$. Haxell's argument \cite{Haxell} showing $m(d) \leq 2d$ and Loh and Sudakov's argument \cite{LohSudakov} showing $m(d) \leq d + o(d)$ were both originally formulated for a more general independent transversal problem, and hence their arguments give the same upper bounds on $\ell(d)$ as well. 

We summarize the discussion above with the following inequalities:
\begin{eqnarray}
\label{eqn:haxell}
d + 2 \leq m(d) &\leq& \ell(d) \leq 2d \\
\nonumber
d + 2 \leq m(d) &\leq& \ell(d) \leq d + o(d). 
\end{eqnarray}

In \cite{CCtrees}, Aharoni, Berger, Chudnovsky, Havet, and Jiang considered the value $m_{\mathcal F}(d)$ for the class $\mathcal F$ of forests. 
These authors obtained a lower bound for $m_{\mathcal F}(d)$ from a construction and obtained an upper bound for $m_{\mathcal F}(d)$ by using a creative application of the Lov\'asz Local Lemma that resembles an earlier method used by Bernshteyn, Kostochka, and Zhu \cite[Section 4.2]{DPFrac}, which involves giving each vertex in the problem a random color inventory and then attempting to greedily give each vertex a color from its inventory. Since the method for obtaining an upper bound on $m_{\mathcal F}(d)$ also applies to the cooperative list coloring problem with no changes, we have the following result from \cite{CCtrees}:
 \begin{equation}
\label{eqn:ejc}
\log_2 \log_2 d \leq m_{\mathcal F}(d) \leq \ell_{\mathcal F}(d) \leq (1 + o(1))\log_{4/3} d.
\end{equation}

By using a similar approach involving the Lov\'asz Local Lemma, Bradshaw and Masa\v{r}\'ik \cite{SCC} showed that the upper bound in (\ref{eqn:ejc}) applies not only to forests, but also to graph families of bounded degeneracy $k$ at the expense of a constant factor. In particular, they showed that if $\mathcal H$ is the family of $k$-degenerate graphs, then $\ell_{\mathcal H}(d) \leq 13(1+k\log(kd))$.

In this paper, we will construct a family of forests which we can use to prove that $m_{\mathcal F}(d) \geq (1 + o(1)) \frac{\log d}{\log \log d}$, improving the lower bound in (\ref{eqn:ejc}) significantly.
One interesting feature of our construction is that each graph in our family is a forest of stars. 
Hence, we write $\mathcal S$ for the class of of star forests, and since $\mathcal S \subseteq \mathcal F$, we observe that $m_{\mathcal S}(d) \leq m_{\mathcal F}(d)$.
With $\mathcal S$ defined, we remark that our construction actually implies the stronger lower bound $m_{\mathcal S}(d) \geq (1 + o(1)) \frac{\log d}{\log \log d}$. With a lower bound for $m_{\mathcal S}(d)$ established, it is also natural to ask for an upper bound on $m_{\mathcal S}(d)$. We will prove two results that both imply, as a corollary, that $m_{\mathcal S}(d) \leq \ell_{\mathcal S}(d) \leq  (1+o(1))\frac{\log d}{\log \log d}$, and hence, we will conclude that both $m_{\mathcal S}(d)$ and $\ell_{\mathcal S}(d) $ are of the form $ (1+o(1))\frac{\log d}{\log \log d}$.

\section{A lower bound for $m_{\mathcal S}(d)$}
In this section, we will give a construction that shows that $m_{\mathcal F}(d) \geq m_{\mathcal S}(d) \geq (1+o(1))\frac{\log d}{\log \log d}$. 
For ease of presentation, we will work in the setting of \emph{adapted colorings}, which are defined as follows. Given a multigraph $G$ with a (not necessarily proper) edge coloring $\varphi$, an \emph{adapted coloring} on $(G,\varphi)$ is a (not necessarily proper) vertex coloring $\sigma$ of $G$ in which no edge $e$ is colored the same color as both of its endpoints $u$ and $v$---that is, $\neg \left ( \varphi(e) = \sigma(u) = \sigma(v) \right )$. In other words, if $e \in E(G)$ is a $\mathtt{red}$ edge, then both endpoints of $e$ may not be colored $\mathtt{red}$, but both endpoints of $e$ may be colored, say, $\mathtt{blue}$, and the endpoints of $e$ may also be colored with two different colors.
A cooperative coloring problem on a family $\mathcal G$ may be translated into an adapted coloring problem by coloring the edges of each graph $G_i \in \mathcal G$ with the color $i$ and then considering the multigraph obtained from the union of all graphs in $\mathcal G$, and an adapted coloring problem may be similarly translated into a cooperative coloring problem. Adapted colorings were first considered by Kostochka and Zhu \cite{KostochkaZhu} and have been frequently studied since then \cite{ AdaptablePlanar,HellZhu, MolloyLB, MolloyUB}.

With the adapted coloring framework defined, we are ready to prove our lower bound for $m_{\mathcal S}(d)$. 

\begin{theorem}
\label{thm:LB}
$m_{\mathcal S}(d) \geq (1+o(1))\frac{\log d}{\log \log d}$.
\end{theorem}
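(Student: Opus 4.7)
The plan is to prove the lower bound by constructing, for each sufficiently large $d$, a family $\mathcal{G}$ of $k=(1+o(1))\log d/\log\log d$ star forests of maximum degree at most $d$ on a common vertex set $V$, admitting no cooperative coloring. Via the adapted coloring framework just introduced, this is equivalent to exhibiting a multigraph $G$ on $V$ and an edge coloring $\varphi\colon E(G)\to[k]$ whose color classes are star forests of maximum degree at most $d$, such that $(G,\varphi)$ has no adapted vertex coloring.

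A natural template for the construction is to take $V = [m]^k$ with $m \leq d+1$, and for each color $i \in [k]$, form $S_i$ by making each ``line'' in direction $i$ (the $m$ vertices differing only in coordinate $i$) into a star. The position of the center within the line is encoded by a function $g_i\colon [m]^{k-1}\to[m]$, to be determined---either randomly or by explicit design. This automatically gives each $S_i$ maximum degree $m-1 \leq d$, so the main task is to choose the $g_i$'s so that no vertex coloring $\sigma\colon V \to [k]$ is adapted.

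For the probabilistic version, I would choose the $g_i$'s independently and uniformly at random. For each line $(i,\vec a)$, let $r_i(\vec a)$ denote the fraction of that line's vertices colored $i$ by $\sigma$; the conditional probability that this line produces no bad star (given $\sigma$) is $1 - r_i(\vec a)$ when $r_i(\vec a) m \geq 2$, and is $1$ otherwise. After invoking $1 - x \leq e^{-x}$ and summing over lines via a double-counting argument on the color-class sizes of $\sigma$, I would obtain an upper bound on the probability that $\sigma$ is adapted, and attempt to close the argument by a union bound over the $k^{|V|}$ candidate $\sigma$'s.

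The main obstacle is making this union bound go through at the tight threshold $k = (1+o(1))\log d/\log\log d$. A naive probabilistic analysis with $V=[m]^k$ and fully random $g_i$'s falls short---it only recovers a $\log\log d$-type bound comparable to that of Aharoni, Berger, Chudnovsky, Havet, and Jiang. To push to $\log d/\log\log d$, I expect one needs to either (i) supplement the random construction with an additional structural layer, such as a recursive product-of-constructions in which the family for $k$ colors is built from many copies of a family for $k-1$ colors glued by a new star forest of color $k$ whose maximum degree grows only mildly with the recursion depth, or (ii) replace the naive union bound with a sharper probabilistic tool (weighted union bound, Janson's inequality, or a Lovász Local Lemma variant) that exploits the limited overlap among bad stars. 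The delicate step will be verifying that the chosen parameters $(m,k,|V|)$ allow the argument to close at the precise value $k=(1+o(1))\log d/\log\log d$, which in turn dictates the exact form of the construction.
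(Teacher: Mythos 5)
Your proposal does not constitute a proof, and you acknowledge this yourself: the probabilistic grid construction on $V=[m]^k$, which is the only approach you develop in any detail, ``falls short'' and only recovers a $\log\log d$-type bound. The alternatives you float in the final paragraph are pointed in roughly the right direction but remain unexecuted, and the sketch you give of option (i) is off in two important ways.

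The paper's construction is a purely deterministic recursion, and the details matter. One builds edge-colored graphs $(G_t,\varphi_t)$ with color set $\{1,\dots,t\}$, monochromatic star forests, and no adapted coloring, starting from a single edge. To go from $t$ to $t+1$, one takes $t+1$ disjoint copies $H_1,\dots,H_{t+1}$ of $G_t$, where copy $H_i$ is recolored via a shift $\psi_i$ so that its colors lie in $\{1,\dots,t+1\}\setminus\{i\}$; since $H_i$ has no adapted coloring on its own $t$-element palette, any adapted coloring of $H_i$ from the full palette $\{1,\dots,t+1\}$ must use color $i$ somewhere in $H_i$. Then a single new vertex $v$ is added, joined to \emph{every} vertex of $H_i$ by an edge of color $i$. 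Thus $v$ is blocked from every color, and $G_{t+1}$ has no adapted coloring. Your sketch says the copies are ``glued by a new star forest of color $k$''; this misses that the gluing vertex needs edges of all $t+1$ colors (one color per copy), and it omits entirely the shift-and-forcing mechanism, which is the whole reason the gluing blocks every color at $v$. Your sketch also says the new star forests have ``maximum degree [growing] only mildly with the recursion depth,'' which is backwards: the monochromatic degree at $v$ equals $|V(G_t)|$, and the recursion $V_{t}=tV_{t-1}+1$, $\Delta_t=V_{t-1}$ gives $\Delta_t=(e+o(1))(t-1)!$, i.e.\ factorial growth. It is precisely this factorial growth that, upon inverting $d\approx t!$, yields $t\geq(1+o(1))\log d/\log\log d$. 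Without the precise gluing rule, the forcing argument, and the degree recursion and its asymptotics, the proposal has a genuine gap and does not establish the claimed bound.
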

\begin{proof}
For each value $t \geq 1$, we will construct a graph $G_t$ whose edges are colored with $\{1, \dots, t\}$ by some function $\varphi_t$ and whose monochromatic subgraphs are star forests. We will show that $(G_t,\varphi_t)$ does not have an adapted coloring with the colors $\{1, \dots, t\}$. Then, we will translate the edge-colored graph $(G_t, \varphi_t)$ into a graph family $\mathcal G_t$ that proves our lower bound. 

We will construct the edge-colored graphs $(G_t, \varphi_t)$ recursively. First, we let $(G_1,\varphi_1)$ be a $K_2$ whose edge is colored with the color $1$. Now, suppose we have constructed $G_t$ along with an edge-coloring $\varphi_t: E(G_t) \rightarrow \{1, \dots, t\}$, and suppose that $(G_t,\varphi_t)$ does not have an adapted coloring with the color set $\{1, \dots, t\}$. For $1 \leq i \leq t+1$, we define a shift function $\psi_i:\{1,\dots,t\} \rightarrow \{1, \dots, t+1\}$ so that 
\[\psi_i(x) = 
\begin{cases}
x & 1 \leq x \leq i-1 \\
x+1 & i \leq x \leq t.
\end{cases}
\]
Now, we construct $(G_{t+1},\varphi_{t+1})$ first by creating $t+1$ disjoint copies $H_1, \dots, H_{t+1}$ of $G_t$, where each $H_i$ is edge-colored with the function $\psi_i \circ \varphi_t$. Observe that $(H_i, \psi_i \circ \varphi_t)$ is isomorphic to $(G_t, \varphi_t)$ as an edge-colored graph, and hence $(H_i, \psi_i \circ \varphi_t)$ does not have an adapted coloring with the colors $\{1, \dots, i-1, i+1, \dots, t+1\}$. Therefore, in any adapted coloring of $(H_i, \psi_i \circ \varphi_t)$ using the color set $\{1, \dots, t+1\}$, some vertex must be colored $i$. Now, we construct $(G_{t+1}, \varphi_{t+1})$ by first taking our $t+1$ disjoint edge-colored copies $(H_i, \psi_i \circ \varphi_t)$ of $G_t$ and adding a single new vertex $v$, and then adding an edge of color $i$ joining $v$ and each vertex of $H_i$, for $1 \leq i \leq t+1$. We call this new graph $G_{t+1}$, and we call its edge-coloring $\varphi_{t+1}$. Observe that by construction, all monochromatic subgraphs of $(G_{t+1}, \varphi_{t+1})$ are star forests. Furthermore, for each value $1 \leq i \leq t+1$, some vertex of $H_i$ must be colored with $i$, and hence no color from the set $\{1, \dots, t+1\}$ is available at $v$. Therefore, $(G_{t+1}, \varphi_{t+1})$ has no adapted coloring using the set $\{1, \dots, t+1\}$.

Now, we compute the maximum degree of each monochromatic subgraph of $G_t$. We write $V_t = |V(G_t)|$, and we write $\Delta_t$ for the maximum number of edges of a single color incident to a vertex in $(G_t, \varphi_t)$. It is easy to see that $\Delta_1 = 1$, $V_1 = 2$, and that the following recursion holds for $t \geq 2$:
\begin{eqnarray*}
\Delta_t &=&V_{t-1}\\
V_t &=& tV_{t-1} + 1.
\end{eqnarray*}
Solving this recurrence, we see that 
\begin{eqnarray*}
V_t &=& V_1 t^{\underline{t-1}} + t^{\underline{t-2}} + \dots +  t^{ \underline{2} }+  t^{ \underline{1} } + 1 = (e+o(1))t! \\
\Delta_t &= & (e+o(1))(t-1)! ,
\end{eqnarray*}
where $t^{\underline{k}} = t! / (t-k)!$ is the falling factorial.

Now, consider a value $d$, and choose $t$ so that $\Delta_t \leq d < \Delta_{t+1}$. We construct $(G_t, \varphi_t)$ as above, and we obtain a graph family $\mathcal G_t = \{G_1, \dots G_t\}$ on the universal vertex set $V(G_t)$ by letting each $G_i \in \mathcal G_t$ have an edge set consisting of those edges of color $i$ in $(G_t, \varphi_t)$. Observe that each graph in $\mathcal G_t$ is a star forest of maximum degree at most $d$. Furthermore, since $(G_t, \varphi_t)$ has no adapted coloring using the color set $\{1, \dots, t\}$, it follows that $\mathcal G_t$ has no cooperative coloring. 
Since $ d \leq (e + o(1))t!$, it follows that $t  \geq (1 + o(1)) \frac{\log d}{\log \log d}$. 
Hence, $m_{\mathcal S}(d) \geq (1 + o(1)) \frac{\log d}{\log \log d}$, completing the proof.
\end{proof}

\section{A partition lemma and an upper bound on $\ell_{\mathcal S}(d)$}
In this section, we aim to show that $\ell_{\mathcal S} (d) \leq (1 + o(1)) \frac{\log d }{\log \log d}$.
In order to prove this upper bound, we establish a partition lemma, which essentially shows that if $\mathcal H$ 
is a graph class whose graphs can be vertex-partitioned into members of classes $\mathcal A$ and $\mathcal B$ for which $\ell_{\mathcal A}(d)$ and $\ell_{\mathcal B}(d)$ are not too large, 
then $\ell_{\mathcal H} (d)$ is also not too large. 
While proving the partition lemma, it is essential that we work in the setting of cooperative list colorings rather than the setting of cooperative colorings. 

While we can use our partition lemma to prove the upper bound $\ell_{\mathcal S} (d) \leq (1 + o(1)) \frac{\log d }{\log \log d}$ directly, we will see that the lemma gives us stronger results that imply this upper bound on $\ell_{\mathcal H}(d)$ as a corollary. We will prove two results that both show an upper bound on $\ell_{\mathcal H}(d)$ for some graph class $\mathcal H$ based on certain forest structures in the graphs of $\mathcal H$, and both of these results will imply that $\ell_{\mathcal S}(d)\leq (1 + o(1)) \frac{\log d }{\log \log d} $.

One tool that we will need to prove the partition lemma is the well-known Lov\'asz Local Lemma, which first appears in a weaker form in \cite{LLL} and can be found in many textbooks, including for example \cite[Chapter 4]{MolloyReed}.

\begin{lemma} \label{lem:LLL}
Let $\mathcal Q$ be a finite set of bad events. Suppose that each event $B \in \mathcal Q$ occurs with probability at most $p$, and suppose further that each event $B \in \mathcal Q$ is dependent with at most $D$ other events $B' \in \mathcal Q$. If 
\[ep(D+1) \leq 1,\]
then with positive probability, no bad event in $\mathcal Q$ occurs.
\end{lemma}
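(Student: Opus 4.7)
The plan is to prove the stronger auxiliary claim that for every bad event $B \in \mathcal{Q}$ and every subfamily $S \subseteq \mathcal{Q} \setminus \{B\}$,
\[\Pr\!\left[B \,\Big|\, \bigcap_{B' \in S} \overline{B'}\right] \leq \frac{1}{D+1},\]
by induction on $|S|$. Once this auxiliary claim is established, the conclusion of the lemma follows, after fixing any enumeration $B_1,\dots,B_n$ of $\mathcal Q$, from the chain-rule identity
\[\Pr\!\left[\bigcap_{B \in \mathcal{Q}} \overline{B}\right] = \prod_{i=1}^{n} \Pr\!\left[\overline{B_i} \,\Big|\, \bigcap_{j < i} \overline{B_j}\right] \geq \left(1 - \tfrac{1}{D+1}\right)^{n} > 0.\]
The base case $|S|=0$ reduces to $\Pr[B]\leq p \leq 1/(D+1)$, which is immediate from the hypothesis $ep(D+1)\leq 1$.

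For the inductive step, I would partition $S$ as $S = S_1 \cup S_2$, where $S_1$ consists of those events in $S$ that are dependent on $B$ (so $|S_1|\leq D$) and $S_2$ consists of those independent of $B$. Writing the conditional probability as a ratio,
\[\Pr\!\left[B \,\Big|\, \bigcap_{B' \in S} \overline{B'}\right] = \frac{\Pr\!\left[B \cap \bigcap_{B' \in S_1} \overline{B'} \,\Big|\, \bigcap_{B' \in S_2} \overline{B'}\right]}{\Pr\!\left[\bigcap_{B' \in S_1} \overline{B'} \,\Big|\, \bigcap_{B' \in S_2} \overline{B'}\right]}.\]
For the numerator, I would drop the $\bigcap_{B'\in S_1}\overline{B'}$ term as an upper bound and then invoke the mutual independence of $B$ from the events in $S_2$, giving a numerator at most $\Pr[B]\leq p$. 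For the denominator, I would expand the intersection as a product of conditional probabilities via the chain rule; each resulting factor has the form $\Pr[\overline{B'_i}\mid \cdots]$ where the conditioning event is a strict subset of $S$ that does not contain $B'_i$, so the inductive hypothesis bounds each factor below by $D/(D+1)$. Multiplying these $|S_1|\leq D$ factors bounds the denominator below by $(D/(D+1))^D \geq 1/e$, and combining with the numerator bound yields ratio $\leq ep \leq 1/(D+1)$, closing the induction.

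The main obstacle is the careful bookkeeping in the inductive step: one must verify that when the denominator is expanded by the chain rule, each resulting factor conditions on a proper subset of $S$ that does not contain the event whose conditional probability is being bounded, so that the inductive hypothesis legitimately applies. The key quantitative ingredient, once this setup is in place, is the standard inequality $\left(1 - \tfrac{1}{D+1}\right)^D \geq 1/e$, which together with the symmetric hypothesis $ep(D+1) \leq 1$ is precisely what makes the numerator-over-denominator estimate close up.
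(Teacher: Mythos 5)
The paper does not prove Lemma \ref{lem:LLL} at all; it simply cites \cite{LLL} and \cite[Chapter 4]{MolloyReed} and uses the result as a black box. So there is no internal proof to compare against. What you have written is the standard textbook proof of the symmetric Lov\'asz Local Lemma, and it is essentially correct: the induction on $|S|$ with the auxiliary claim $\Pr[B \mid \bigcap_{B'\in S}\overline{B'}] \leq \frac{1}{D+1}$, the split $S=S_1\cup S_2$, the numerator bound via independence from $S_2$, the denominator expansion via the chain rule (with each factor legitimately covered by the inductive hypothesis because the conditioning set is a subset of $S$ of strictly smaller size not containing the event in question), and the quantitative step $\bigl(\tfrac{D}{D+1}\bigr)^D \geq \tfrac{1}{e}$ combined with $ep(D+1)\leq 1$ all go through.

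One small loose end worth noting: the final product bound $\bigl(1-\tfrac{1}{D+1}\bigr)^n$ is zero when $D=0$, so as literally written the conclusion ``$>0$'' does not follow in that degenerate case. The fix is trivial --- when $D=0$ all bad events are mutually independent and $p\leq 1/e<1$, so $\Pr\bigl[\bigcap\overline{B}\bigr] \geq (1-p)^n>0$ directly --- or, equivalently, one can strengthen the auxiliary claim to $\Pr[B\mid\cdots]\leq ep$ when $S_1=\emptyset$ (where the denominator is actually $1$, not merely $\geq 1/e$) and treat the $D=0$ case separately. Apart from this bookkeeping at the boundary, your argument is the standard proof and is sound.
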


Our partition lemma is as follows.

\begin{lemma}
\label{lem:partition}
Let $\mathcal H$, $\mathcal A$, and $\mathcal B$ be graph classes, and let $ t = t(d)$ be a function of $d$. Suppose that 
\begin{itemize}
\item Each graph $G \in \mathcal H$ of maximum degree at most $d$ can be vertex-partitioned into sets $A$ and $B$ so that $G[A] \in \mathcal A$ and $G[B] \in \mathcal B$, and so that each vertex in $A$ has at most $t$ neighbors in $B$,
\item  $\ell_{\mathcal A}(d) = o(\log d)$,
\item $\ell_{\mathcal B}(d) t = o(\log d)$.
\end{itemize}
Then,
\[\ell_{\mathcal H}(d) \leq (1 + o(1)) \frac{\log d}{\log \log d - \log (\ell_{\mathcal B}(d) t)} + \ell_{\mathcal A}.\]
\end{lemma}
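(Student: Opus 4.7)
I would prove the lemma by a random-inventory and Lov\'asz Local Lemma (LLL) argument, in the spirit of \cite{DPFrac,CCtrees,SCC}, using a two-phase coloring that first applies the $\mathcal{B}$-cooperative list coloring to a random sublist and then applies the $\mathcal{A}$-cooperative list coloring to a reserved backup. Fix a family $\mathcal{G}$ in which each $v \in V$ belongs to at least $\ell$ graphs, where $\ell$ is the bound in the conclusion. For each $G_j \in \mathcal{G}$, I would fix the partition $V(G_j) = A_j \cup B_j$ from the hypothesis, and for each vertex $v$ write $L(v) = \{j : v \in V(G_j)\}$, partitioned into $L_A(v)$ and $L_B(v)$ according to whether $v \in A_j$ or $v \in B_j$. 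Set $\ell_A = \ell_{\mathcal{A}}(d)$, $\ell_B = \ell_{\mathcal{B}}(d)$, and $\ell' = \ell - \ell_A$. I would reserve, for each $v$, a backup list $L^*(v) \subseteq L(v)$ of size $\ell_A$, choosing $L^*(v) \subseteq L_A(v)$ whenever $|L_A(v)| \geq \ell_A$; let $L'(v) = L(v) \setminus L^*(v)$, with $|L'(v)| \geq \ell'$. Partition $L'(v)$ into $r := \lfloor \ell'/\ell_B \rfloor$ disjoint blocks $B_1(v), \ldots, B_r(v)$ of size $\ell_B$, and for each $v$ independently pick a uniformly random block index $\pi(v) \in [r]$; set $\tau(v) := B_{\pi(v)}(v)$.

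In Phase 1, I would apply the $\mathcal{B}$-cooperative list coloring guarantee to $\{G_j[B_j]\}$ with lists $\tau(v)$ on the set $U = \{v : \tau(v) \subseteq L_B(v)\}$, producing a partial coloring $c : U \to \mathbb{N}$. In Phase 2, for each $v \notin U$---which automatically satisfies $L^*(v) \subseteq L_A(v)$, since when $|L_A(v)| < \ell_A$ the construction forces $L'(v) \subseteq L_B(v)$ and hence $v \in U$---I would define the reduced list $L^{**}(v)$ by removing every $j \in L^*(v)$ that is \emph{blocked}, meaning some $B_j$-neighbor of $v$ was colored $j$ in Phase 1, and then apply the $\mathcal{A}$-cooperative list coloring guarantee to $\{G_j[A_j]\}$ on $V \setminus U$ with lists $L^{**}(v)$.

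For the LLL argument, let $\mathrm{Bad}(v)$ be the event that $v \notin U$ and $|L^{**}(v)| < \ell_A$. Since $\Pr[j \in \tau(u)] = 1/r = \ell_B/\ell'$ for every color $j$ and vertex $u$, and since $v$ has at most $t$ $B_j$-neighbors per color, the per-color blocking probability is at most $t\ell_B/\ell'$. A single block of $v$ fails (all $\ell_B$ of its colors blocked) with probability at most $(t\ell_B/\ell')^{\ell_B}$ by approximate independence, and the failure of every one of $v$'s $r = \ell'/\ell_B$ blocks has probability at most $((t\ell_B/\ell')^{\ell_B})^{\ell'/\ell_B} = (t\ell_B/\ell')^{\ell'}$. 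By the hypothesis $\ell' \geq (1 + o(1)) \log d / \log(\log d/(\ell_B t))$, this is at most $1/d^c$ for some constant $c > 2$, and each $\mathrm{Bad}(v)$ depends only on the block choices of vertices within bounded distance of $v$, giving polynomial-in-$d$ dependency. LLL then yields a choice $\pi$ of block indices avoiding every bad event, and Phases 1 and 2 combine to produce the desired cooperative list coloring.

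The main technical obstacle is making the independence claims rigorous: the failures of different blocks of $v$, and of different colors within a single block, are not exactly independent, because the $\mathcal{B}$-cooperative list coloring's output in Phase 1 depends jointly on all random block choices. A careful coupling argument---conditioning on the block choices of $v$'s neighbors and upper-bounding $\Pr[c(u) = j \mid j \in \tau(u)]$ worst-case by $1$---is needed to obtain the correct exponent $\ell'$ in the bound on $\Pr[\mathrm{Bad}(v)]$. A secondary subtlety is verifying that the case split in the construction of $L^*(v)$ indeed forces $L^*(v) \subseteq L_A(v)$ for every $v \notin U$, so that the $\mathcal{A}$-cooperative list coloring in Phase 2 is well-posed.
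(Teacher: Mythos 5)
Your proposal follows the paper's blueprint --- random color inventories, a Phase 1 coloring via $\mathcal{B}$ on a subset $U$, a Phase 2 coloring via $\mathcal{A}$ on the remainder, and the Lov\'asz Local Lemma --- but the way you allocate slack breaks the probability calculation. You reserve a backup list $L^*(v)$ of size \emph{exactly} $\ell_{\mathcal A}$ for Phase 2, so $\mathrm{Bad}(v)$ already fires if a \emph{single} backup color is blocked. You yourself compute a per-color blocking probability of order $t\ell_{\mathcal B}/\ell'$, which gives $\Pr[\mathrm{Bad}(v)] = \Theta(\ell_{\mathcal A}\, t\ell_{\mathcal B}/\ell')$; in the star-forest case ($\ell_{\mathcal A}=\ell_{\mathcal B}=t=1$, $\ell' \approx \log d/\log\log d$) this is about $\log\log d/\log d$, nowhere near the $1/\mathrm{poly}(d)$ threshold an LLL with $\mathrm{poly}(d)$ dependency degree demands. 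The quantity $(t\ell_{\mathcal B}/\ell')^{\ell'}$ that you estimate is the probability of a different event (all $r$ blocks of $L'(v)$ wiped out), and nothing in your argument connects it to $\Pr[|L^{**}(v)| < \ell_{\mathcal A}]$.

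The paper avoids this by putting the slack on the Phase 2 side. It chooses $U$ \emph{deterministically} as the set of vertices belonging to more than $\epsilon(\ell - \ell_{\mathcal A})$ of the sets $B_i$; every $v \notin U$ then belongs to at least $\ell - \epsilon(\ell - \ell_{\mathcal A})$ sets $A_i$, a surplus of $(1-\epsilon)(\ell-\ell_{\mathcal A}) = \Theta(\ell)$ beyond the $\ell_{\mathcal A}$ colors actually needed. Its bad event requires \emph{more than} $(1-\epsilon)(\ell-\ell_{\mathcal A})$ simultaneous blockages, yielding a probability of order $(\ell_{\mathcal B}t/(\epsilon(\ell-\ell_{\mathcal A})))^{(1-\epsilon)(\ell-\ell_{\mathcal A})}$, exponentially small in $\ell$. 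Your fixed-size backup forfeits exactly this surplus. Two secondary points: the dependency issue you flag (Phase 1's output is a global function of all block choices) is resolved in the paper without any coupling by bounding $\Pr[c(u)=j]$ above by $\Pr[j \in C_u]$, which depends only on $u$'s own randomness; and your claimed exponent $c>2$ does not follow from $\ell' \geq (1+o(1))\log d/\log(\log d/(\ell_{\mathcal B}t))$ --- the paper works with an explicit $(1+\gamma)$ factor, $\gamma>0$ arbitrary, precisely to push the exponent above what the dependency count requires.
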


It may help the reader first to visualize $\mathcal A = \mathcal B$ as the class of edgeless graphs and to visualize $\mathcal H = \mathcal S$ as the class of star forests. In this special case, for each star forest $G \in \mathcal H$, we may let $A$ denote the leaf set of $G$ and let $B$ denote the set consisting of the centers of the star components of $G$. In this special case, $\ell_{\mathcal A}(d) = \ell_{\mathcal B}(d) = t = 1$, so the lemma immediately implies that $\ell_{\mathcal S}(d) \leq   (1 + o(1)) \frac{\log d}{\log \log d}$.
\begin{proof}
We fix a value $d$, and 
we consider a family $\mathcal G = \{G_1, \dots, G_k\}$ of graphs from $\mathcal H$ of maximum degree at most $d$ whose vertex sets are subsets of a universal vertex set $V$.
We will write $\ell_{\mathcal A} = \ell_{\mathcal A}(d) $ and $\ell_{\mathcal B} = \ell_{\mathcal B}(d)$.
We assume without loss of generality that each vertex $v \in V$ belongs to exactly $\ell$ graphs in $\mathcal G$. We will show that for each $\gamma > 0$, if $\ell = (1+\gamma)\frac{ \log d}{\log \log d - \log( \ell_{\mathcal B}t )} + \ell_{\mathcal A}$, then when $d$ is sufficiently large, $\mathcal G$ has a cooperative list coloring.

We let $\epsilon > 0$ be a sufficiently small constant (which is at most $1$). For each graph $G_i \in \mathcal G$, we suppose that $V(G_i)$ can be partitioned into sets $A_i$ and $B_i$ satisfying the properties of $A$ and $B$ in the lemma's hypothesis. Note that if every vertex of $V$ belongs to at most $\epsilon (\ell - \ell_{\mathcal A})$ sets $B_i$, then every vertex of $V$ must belong to at least $\ell - \epsilon (\ell - \ell_{\mathcal A}) \geq \ell_{\mathcal A}$ sets $A_i$, and hence a cooperative list coloring on $V$ can be found by taking independent subsets of the graphs $G_i[A_i]$. Therefore, we assume that for some nonempty set $U \subseteq V$ of vertices, each vertex $u \in U$ belongs to more than $\epsilon(\ell - \ell_{\mathcal A})$ sets $B_i$.

Now, for each vertex $u \in U$, we write $\textbf B_u$ for the family of all sets $B_i$ containing $u$, for $1 \leq i \leq k$. Then, we choose a family $\textbf B'_u$ of exactly $\ell_{\mathcal B}$ sets $B_i$ uniformly at random (without replacement) from $\textbf B_u$, and we write $C_u = \{i: B_i  \in \textbf B'_u\}$.
We assign each vertex $u$ a color from $C_u$ so that $\mathcal G[U]$ receives a cooperative list coloring, where $\mathcal G[U] = \{G[U \cap V(G)]:G \in \mathcal G\}$. Note that this is possible, since $|C_u| = \ell_{\mathcal B}$ for each vertex $u \in U$, and since $u \in G_i[B_i]$ for each $i \in C_u$.
After this assignment, if a vertex $v \in V$ has a neighbor $u \in U$ via a graph $G_j$ and $u$ is assigned the color $j$, we then say that $j$ is \emph{unavailable} at $v$. 
If $v \in A_j$ and the color $j$ is not unavailable at $v$, then we say that $j$ is \emph{available} at $v$. Observe that if each uncolored vertex $v \in V$ has at least $\ell_{\mathcal A}$ available colors, then we may extend our cooperative list coloring on $\mathcal G[U]$ to a cooperative list coloring on $\mathcal G$. 
Therefore, for each vertex $v \in V \setminus U$, we define a bad event $X_v$, which is the event that fewer than $\ell_{\mathcal A}$ colors are available at $v$. We will use the Lov\'asz Local Lemma (Lemma \ref{lem:LLL}) to show that with positive probability, no bad event occurs and that we can hence find a cooperative coloring of $\mathcal G$.

Now, consider a vertex $v \in V \setminus U$. Suppose that $v \in  A_j$ for some value $j$. Recall that $v$ has at most $t$ neighbors $u \in B_j$ via $G_j$, and each such neighbor $u$ belonging to $U$ is colored from a randomly chosen set $C_u$ of $\ell_{\mathcal B}$ potential colors. The probability that a given vertex $u \in U \cap N_{G_j}(v)$
 is assigned the color $j$ is at most the probability that $j \in C_u$, which is at most $\frac{\ell_{\mathcal B}}{\epsilon (\ell - \ell_{\mathcal A})}$. Therefore, the probability that $j$ is unavailable at $v$ is at most $\frac{ \ell_{\mathcal B} t}{\epsilon(\ell - \ell_{\mathcal A})}$. Note that this argument remains true even if it is given that some other set of colors has already been made unavailable at $v$. Therefore, since $v$ belongs to at least $\ell - \epsilon(\ell - \ell_{\mathcal A})$ sets $A_i$, $\Pr(X_v)$ is bounded above by the probability that more than 
\[\ell - \epsilon(\ell - \ell_{\mathcal A}) - \ell_{\mathcal A} = (1 - \epsilon)(\ell  - \ell_{\mathcal A})\]
colors are made unavailable at $v$, which is at most 
\[{{\ell}\choose {(1 - \epsilon)(\ell  - \ell_{\mathcal A})}} \left ( \frac{ \ell_{\mathcal B}t}{\epsilon(\ell - \ell_{\mathcal A})} \right )^{(1 - \epsilon)(\ell  - \ell_{\mathcal A})} < 2 ^{\ell} \left ( \frac{\ell_{\mathcal B}t}{\epsilon(\ell - \ell_{\mathcal A})} \right )^{(1 - \epsilon)(\ell  - \ell_{\mathcal A})}  .\]
Since each bad event $X_v$ is dependent with fewer than $\ell t d$ other bad events, the Local Lemma (Lemma \ref{lem:LLL}) tells us that all bad events are avoided with positive probability as long as 
\[ 		2^{\ell} \left ( \frac{ \ell_{\mathcal B} t}{\epsilon(\ell - \ell_{\mathcal A})} \right )^{(1 - \epsilon)(\ell  - \ell_{\mathcal A})}  \cdot \ell t d  \cdot e \leq 1					. \]
Equivalently, by taking the natural logarithm on both sides, no bad event occurs with positive probability as long as 
\[ \ell \log 2 + (1 - \epsilon)(\ell  - \ell_{\mathcal A}) ( \log (\ell_{\mathcal B}t) - \log \epsilon- \log (\ell - \ell_{\mathcal A}))  +\log \ell + \log t + \log d + 1 \leq 0.\]
This inequality can be written more simply as follows:
\[  (1 - \epsilon + o(1))(\ell  - \ell_{\mathcal A}) ( \log (\ell_{\mathcal B}   t)  - \log (\ell - \ell_{\mathcal A}))  +(1+o(1))\log d  \leq 0.\]

We claim that this inequality holds when $d$ is sufficiently large and $\epsilon$ is sufficiently small. Recall that $\ell =  \frac{(1+\gamma) \log d}{\log \log d - \log( \ell_{\mathcal B}t)} + \ell_{\mathcal A}$. When we substitute this value for $\ell$ and assume $d$ is large, we can first write the inequality as 
\[
 (1 - \epsilon + o(1))\left (  \frac{(1 + \gamma) \log d}{\log \log d - \log ( \ell_{\mathcal B}t )}   \right ) ( \log( \ell_{\mathcal B}t) - \log \log d)  + (1+o(1)) \log d    \leq  0 ,
\]
or more simply,
\[
     - (1 - \epsilon + o(1))(1 + \gamma)  \log d  + (1+o(1)) \log d    \leq  0 ,
\]
which holds when $\epsilon$ is sufficiently small and $d$ is sufficiently large. Therefore, with positive probability, our random procedure allows us to complete a cooperative list coloring of $\mathcal G$. Since $\gamma > 0$ can be arbitrarily small, this completes the proof.
\end{proof}

As mentioned before, we can use Lemma \ref{lem:partition} directly to prove that $\ell_{\mathcal S}(d) \leq (1 + o(1)) \frac{\log d}{\log \log d}$,
which shows that the lower bound in Theorem \ref{thm:LB} is best possible up to the $o(1)$ function. We will see that Lemma \ref{lem:partition} also implies much stronger results, and we will prove two such results that both imply this upper bound on $\ell_{\mathcal S}(d)$ as a corollary.








For the first of our results, we will need some definitions. Given a rooted tree $T$ with a root $r$, the \emph{height} of a vertex $v$ in $T$ is the distance from $v$ to $r$, and the height of $T$ is the maximum height achieved over all vertices $v \in V(T)$. Given integers $q \geq 1$ and $h \geq 1$, a \emph{$q$-ary tree of height $h$} is a rooted tree in which every vertex of height at most $h-1$ has exactly $q$ children. Given an integer $k \geq 1$, we write $\log^{(k)} d = \underbrace{ \log \log \dots \log  }_\text{$k$ times} d$. Then, we have the following result.
\begin{theorem}
\label{thm:forbidden}
Let $q \geq 2$ and $h \geq 1$ be fixed integers. If $\mathcal H$ is a family of graphs with no $q$-ary tree of height $h$ as a subgraph, then 
\[\ell_{\mathcal H}(d) \leq (1+o_{q,h}(1)) \frac{\log d}{\log^{(h)} d } + O_q(1).\]
\end{theorem}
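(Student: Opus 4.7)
The plan is to prove the theorem by induction on $h$, applying Lemma \ref{lem:partition} in the inductive step. The naive partition of $V(G)$ according to whether $v$ is the root of a $q$-ary subtree of height $h-1$ in $G$ does not transfer cleanly to induced subgraphs, since witnessing subtrees may share vertices and can be destroyed by restricting to a subset. I would sidestep this by working with a larger but better-behaved auxiliary class defined via iterated degree peeling.

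For integers $k, s \geq 1$ and a graph $G$, let $C^s_1(G)$ denote the subgraph of $G$ induced by the vertices of degree at least $s$ in $G$, and recursively set $C^s_{k+1}(G) = C^s_1(C^s_k(G))$. Write $\mathcal{C}_{k,s} = \{G : C^s_k(G) = \emptyset\}$. The key structural claim is that there is a constant $s_0 = s_0(q,h)$ of order $\Theta(q^h)$ such that every graph in $\mathcal H$ lies in $\mathcal{C}_{h, s_0}$. I would prove this contrapositively: from any $v \in C^{s_0}_h(G)$ one greedily builds a $q$-ary tree of height $h$ rooted at $v$ in $G$, choosing children level by level. At each step the current vertex at level $i$ lies in $C^{s_0}_{h-i}(G)$ and so has at least $s_0$ neighbors in $C^{s_0}_{h-i-1}(G)$; since the entire tree has at most $\frac{q^{h+1}-1}{q-1} \le 2q^h$ vertices, setting $s_0 \ge q + 2q^h$ ensures that $q$ fresh neighbors are always available.

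It then suffices to prove $\ell_{\mathcal{C}_{k, s_0}}(d) \leq (1 + o(1)) \frac{\log d}{\log^{(k)} d} + O_q(1)$ by induction on $k$. In the base case $k = 1$, $\mathcal{C}_{1, s_0}$ is the class of graphs of maximum degree less than $s_0$, for which $\ell \leq 2s_0 - 1 = O_q(1)$ by a trivial greedy argument. For the inductive step, given $G \in \mathcal{C}_{k, s_0}$ of maximum degree at most $d$, I partition $V(G) = A \cup B$ with $A = \{v : \deg_G(v) < s_0\}$ and $B = V(G) \setminus A$. Then $G[A]$ has maximum degree less than $s_0$, so $\ell_{\mathcal A}(d) = O_q(1)$; each vertex of $A$ has fewer than $s_0$ neighbors in $B$, so we may take $t = s_0$; and $G[B] = C^{s_0}_1(G)$ satisfies $C^{s_0}_{k-1}(G[B]) = C^{s_0}_k(G) = \emptyset$, so $G[B] \in \mathcal{C}_{k-1, s_0}$. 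By induction, $\ell_{\mathcal B}(d) \cdot t = O_q\!\left(\frac{\log d}{\log^{(k-1)} d}\right)$, and hence $\log \log d - \log(\ell_{\mathcal B}(d) \cdot t) = (1 - o(1)) \log^{(k)} d$. Lemma \ref{lem:partition} now yields the desired bound, and taking $k = h$ together with $\mathcal H \subseteq \mathcal{C}_{h, s_0}$ proves the theorem.

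The main obstacle is the structural claim $\mathcal H \subseteq \mathcal{C}_{h, s_0}$, which hinges on the greedy tree-embedding and its disjointness bookkeeping; the threshold $s_0$ must be chosen linearly in the total tree size so that partial trees never exhaust a vertex's core-neighborhood. Once this is in place, the rest of the proof is a routine unwinding of Lemma \ref{lem:partition}, and the $O_q(1)$ additive term stays uniformly bounded since $s_0$ depends only on $q$ and the fixed parameter $h$.
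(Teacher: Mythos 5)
Your proof is correct, and it arrives at the result by a route closely parallel to the paper's but organized around a different inductive class. The paper also peels by degree: it sets $k = 2q^h$, takes $A$ to be the vertices of degree less than $k$ in $G$, and shows directly that $G[V \setminus A]$ contains no $q$-ary tree of height $h-1$ (if it did, the high degree of every tree vertex in $G$ lets one graft $q$ fresh children onto each of the $q^{h-1}$ leaves, producing a forbidden $q$-ary tree of height $h$); it then induces on $h$ through the family of graphs forbidding a $q$-ary tree of height $j$, with the threshold $2q^j$ shrinking at each level. So the ``naive partition by subtree-rootedness'' you set out to avoid is not what the paper does --- the paper's partition is the same degree-threshold partition as yours, and its closure argument is a one-level extension rather than a full greedy embedding.

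What you do differently is to factor the argument through the auxiliary class $\mathcal{C}_{k,s_0}$ of graphs whose $k$-fold iterated $s_0$-core is empty, at a \emph{single} threshold $s_0 = \Theta(q^h)$, and to prove $\mathcal H \subseteq \mathcal{C}_{h,s_0}$ once up front via a greedy level-by-level tree embedding with the correct disjointness bookkeeping ($s_0 \geq q + 2q^h$ suffices). This buys a cleaner inductive skeleton: $\mathcal{C}_{k,s_0}$ is \emph{by construction} closed under removal of the low-degree part, so the inductive step is purely bookkeeping, with the tree-forbiddance used only to establish the one-time containment $\mathcal H \subseteq \mathcal{C}_{h,s_0}$. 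The paper's version, while equivalent in substance, re-derives the closure under peeling at each level from the tree-forbiddance assumption. Both routes feed the same partition into Lemma~\ref{lem:partition} with $t = O_q(1)$, $\ell_{\mathcal A}(d) = O_q(1)$, and $\ell_{\mathcal B}(d) = (1+o(1))\log d/\log^{(h-1)} d + O_q(1)$, giving $\log\log d - \log(\ell_{\mathcal B}(d)t) = (1-o(1))\log^{(h)} d$ and hence the claimed bound. Your argument is sound.
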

\begin{proof}
We will prove the theorem by induction on $h$. 
When $h = 1$, then our hypothesis implies that each graph of $\mathcal H$ has maximum degree $q-1$. Hence, by (\ref{eqn:haxell}), it holds that $\ell_{\mathcal H}(d) \leq 2q - 2$, which is certainly of the form $O_{q}(1)$. Hence, the theorem holds when $h = 1$.

Now, suppose  that $h \geq 2$ and that the graphs of $\mathcal H$ contain no $q$-ary tree of height $h$ as a subgraph. 
We write $k = 2q^h$. We consider a graph $G \in \mathcal H$, and we let $A \subseteq V$ be the set of all vertices $v \in V$ for which $\deg_G(v) < k$. Now, we claim that $G \setminus A$ has no $q$-ary tree subgraph of height $h-1$. 
Indeed, suppose that $G \setminus A$ contains a $q$-ary tree $T$ of height $h-1$ as a subgraph. Since no vertex of $T$ belongs to $A$, this implies that every vertex of $T$ must have degree at least $k$ in $G$. However, 
 since 
\[k = 2q^h >(q^{h-1} -1)q + 2q^{h-1} > (q^{h-1} -1)q + |V(T)| ,\]
we hence can greedily choose a set $N_x$ of $q$ neighbors in $N_G(x) \cap (V(G) \setminus V(T))$ for each of the $q^{h-1}$ leaves $x \in V(T)$ in such a way that the sets $N_x$ are pairwise disjoint. Then, by taking the union of $T$ and the sets $N_x$, we have a $q$-ary tree of height $h$ in $G$, a contradiction. Thus, we conclude that $G \setminus A$ has no $q$-ary tree of height $h-1$.

Now, for each $G$, we define the set $A$ as described above, and we let $B = V(G) \setminus A$. By construction, each vertex of $A$ as at most $k$ neighbors in $B$ via the graph $G$. Furthermore, $G[A]$ belongs to the family $\mathcal A$ of graphs of maximum degree $k$, which satisfies $\ell_{\mathcal A}(d) \leq 2k$ by (\ref{eqn:haxell}), and $G[B]$ belongs to the family $\mathcal B$ of graphs with no subgraph isomorphic to a $q$-ary tree of height $h$. By the induction hypothesis, it holds that $\ell_{\mathcal B}(d) \leq  (1+o_{q,h}(1)) \frac{\log d}{\log^{(h-1)} d } + O_q(1)$. Therefore, we can apply Lemma \ref{lem:partition}.

By applying Lemma \ref{lem:partition} and recalling that $k$ and $\ell_{\mathcal A}(d)$ are constants depending on $q$ and $h$, we conclude that 
\[\ell_{\mathcal H}(d) \leq (1 + o_{q,h}(1)) \frac{\log d}{\log \log d - \log (\ell_{\mathcal B})}.\]
As $h \geq 2$, it holds that $\log \ell_{\mathcal B}(d) = \log \log d - \log^{(h)}d + O_{q,h}(1)$, and thus the theorem is proven.
\end{proof}
To use Theorem \ref{thm:forbidden} to prove that $\ell_{\mathcal S}(d) \leq (1 + o(1)) \frac{\log d}{\log \log d}$, consider a binary tree of height $2$, which has $7$ vertices and $4$ leaves. Since no star forest contains this binary tree as a subgraph, the upper bound on $\ell_{\mathcal S}(d)$ follows from Theorem \ref{thm:forbidden} with $q = h = 2$.

Next, we show that if $\mathcal H$ is a graph class whose graphs have a certain quotient of bounded treedepth, then $\ell_{\mathcal H}(d)$ can be bounded above. For this next theorem, we will need some more definitions. If $G$ is a graph and $U_1, \dots, U_k$ is a partition of $V(G)$, then the \emph{quotient graph} $G / (U_1, \dots, U_k)$ is the graph on $k$ vertices obtained by contracting each part $U_i$ to a single vertex and deleting all resulting loops and parallel edges. 

Given a rooted tree $T$ with a root $r$, we define the \emph{closure} of $T$ as the graph on $V(T)$ in which two vertices $u, v \in V(T)$ are adjacent if and only if $u$ and $v$ form an ancestor-descendant pair. Given a rooted forest $F$, in which each tree component has a root, the closure of $F$ is the union of the closures of the components of $F$. For a graph $G$, if there exists a rooted tree $T$ of height $h-1$ such that $G$ is a subgraph of the closure of $T$, then we say that the \emph{treedepth} of $G$ is at most $h$. The reason for this ``off-by-one error" is that if $T$ has height $h-1$, then the longest path in $T$ with the root as an endpoint contains exactly $h$ vertices.

With these definitions in place, we are ready for our second theorem implying that $\ell_{\mathcal S}(d) \leq (1 + o(1)) \frac{\log d}{\log \log d}$.

\begin{theorem}
\label{thm:td}
Let $0 < \epsilon < \frac{1}{2}$ be a fixed value.
Let $\mathcal H$ be a graph class for which each graph $G \in \mathcal H$ has a partition into parts $U_1, \dots, U_k$ of size at most $t = (\log d)^{\epsilon}$, so that each component of the quotient graph $G / (U_1, \dots, U_k)$ has treedepth at most $h$. Then, $\ell_{\mathcal H}(d) \leq \frac{h - 1 + o_h(1)}{1 - 2 \epsilon} \cdot \frac{\log d}{\log \log d}$.
\end{theorem}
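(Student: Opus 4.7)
The plan is to prove Theorem \ref{thm:td} by induction on $h$, using Lemma \ref{lem:partition} in the inductive step with a partition driven by treedepth decompositions of the quotient graph's components. For the base case $h = 1$, each component of the quotient is a single vertex, so every edge of $G$ lies inside a single part $U_i$, forcing the maximum degree to be at most $t - 1 \leq (\log d)^\epsilon$. By (\ref{eqn:haxell}), $\ell_{\mathcal H}(d) \leq 2(\log d)^\epsilon = o\!\left(\frac{\log d}{\log \log d}\right)$, which is consistent with the theorem's bound (whose leading coefficient vanishes at $h = 1$).

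For the inductive step, fix $G \in \mathcal H$ together with its partition $U_1, \dots, U_k$, and for each component $Q$ of the quotient $G/(U_1, \dots, U_k)$ fix a rooted tree $T_Q$ of height at most $h - 1$ whose closure contains $Q$, with root $r_Q$. Let $B$ be the union of the root parts $U_{r_Q}$ over all components $Q$, and let $A = V(G) \setminus B$. I will check that this partition meets the three hypotheses of Lemma \ref{lem:partition}. First, distinct root parts lie in distinct components of the quotient and hence are non-adjacent there, so every edge of $G[B]$ sits inside a single root part; thus $G[B]$ has maximum degree at most $t - 1$, and (\ref{eqn:haxell}) gives $\ell_{\mathcal B}(d) \leq 2t$. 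Second, restricting the partition to $A$ produces a quotient whose components have treedepth at most $h - 1$, because removing the root from each $T_Q$ leaves a forest of height at most $h - 2$ whose closure still covers $Q - r_Q$; hence the induction hypothesis applies to $G[A]$, giving $\ell_{\mathcal A}(d) \leq \frac{h - 2 + o_h(1)}{1 - 2\epsilon} \cdot \frac{\log d}{\log \log d}$. Third, and crucially, any vertex $v \in U_i \subseteq A$ has its $B$-neighbors confined to $U_{r_Q}$ for the unique component $Q$ containing $U_i$, since any quotient edge leaving $U_i$ stays in $Q$; as $|U_{r_Q}| \leq t$, vertex $v$ has at most $t$ neighbors in $B$.

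It remains to verify the numeric hypotheses of Lemma \ref{lem:partition} and to plug in. The estimate $\ell_{\mathcal A}(d) = o(\log d)$ is immediate from the inductive bound, and $\ell_{\mathcal B}(d)\, t \leq 2t^2 = 2(\log d)^{2\epsilon} = o(\log d)$ thanks to $\epsilon < \tfrac12$; this is the only place where the hypothesis $\epsilon < 1/2$ is essential. Lemma \ref{lem:partition} then yields
\[
\ell_{\mathcal H}(d) \leq (1 + o(1)) \frac{\log d}{\log \log d - \log(\ell_{\mathcal B}(d)\, t)} + \ell_{\mathcal A}(d).
\]
Using $\log(\ell_{\mathcal B}(d)\, t) \leq (2\epsilon + o(1))\log \log d$, the denominator is at least $(1 - 2\epsilon - o(1))\log \log d$, so the first term contributes $\frac{1 + o(1)}{1 - 2\epsilon} \cdot \frac{\log d}{\log \log d}$, which combined with the inductive estimate on $\ell_{\mathcal A}(d)$ telescopes to the claimed $\frac{h - 1 + o_h(1)}{1 - 2\epsilon} \cdot \frac{\log d}{\log \log d}$.

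The main obstacle I anticipate is establishing the ``one root per non-root part'' property that forces each vertex of $A$ to have at most $t$ neighbors in $B$; this is exactly why the partition peels off only the roots of each $T_Q$, not a larger prefix. Peeling more aggressively would reduce the residual treedepth faster but could blow up the number of $B$-neighbors each $A$-vertex has, which would violate the $\ell_{\mathcal B}(d)\, t = o(\log d)$ hypothesis of Lemma \ref{lem:partition} and destroy the sharp $\frac{\log d}{\log \log d}$ scaling in the final bound.
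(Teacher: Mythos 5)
Your proposal is correct and follows essentially the same approach as the paper: induction on $h$, peeling off the root part of a treedepth decomposition of each quotient component to form $B$, bounding $\ell_{\mathcal B}(d) \leq 2t$ and the $B$-degree of $A$-vertices by $t$, and then invoking Lemma~\ref{lem:partition}. The only cosmetic deviation is that you bound $\ell_{\mathcal B}(d)$ via a maximum-degree estimate while the paper notes that each component of $G[B]$ has at most $t$ vertices, but both routes use~(\ref{eqn:haxell}) and yield the same bound.
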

\begin{proof}
We prove the theorem by induction on $h$. When $h = 1$, for each graph $G \in \mathcal H$, the quotient graph $G/(U_1, \dots, U_k)$ is an independent set, so each component of $G$ has at most $t$ vertices. Therefore, $\ell_{\mathcal H}(d) <  2t = o\left (\frac{\log d}{\log \log d} \right )$ by (\ref{eqn:haxell}).

Now, suppose that $h \geq 2$. Consider a graph $G \in \mathcal H$. Let $F$ be a rooted forest subgraph of $G / (U_1, \dots, U_k)$ in which each component has height at most $h-1$ and so that the closure of $F$ contains $G/ (U_1, \dots, U_k)$.
We partition $V(G)$ into parts $A$ and $B$ so that $B$ contains the sets $U_i$ corresponding to the roots of $F$ and $A$ contains all other vertices of $G$. Observe that each component of $G[B]$ contains at most $t$ vertices, and each component $K$ of $G[A]$ can be partitioned using the sets $U_i$ so that the quotient graph of $K$ with respect to this partition has treedepth at most $h-1$. Finally, observe that a vertex $v \in A$ is adjacent to a given vertex $u \in B$ only if $v$ belongs to a set $U_i$, $U_j$ is the root ancestor of $U_i$ in $F$, and $u \in U_j$. Hence, each vertex $v \in A$ has at most $|U_j| \leq t$ neighbors in $B$.

Now, we apply Lemma \ref{lem:partition} to $\mathcal H$.
We let $\mathcal A$ be the graph class defined to satisfy the same conditions of $\mathcal H$ except with $h$ replaced by $h-1$, and we let $\mathcal B$ be the class of graphs whose components each have at most $t$ vertices. By the induction hypothesis, $\ell_{\mathcal A}(d) \leq \frac{(h - 2 + o_h(1))}{1-2\epsilon} \cdot \frac{\log d}{\log \log d}$, and $\ell_{\mathcal B}(d) < 2t$ by (\ref{eqn:haxell}). Since $2t^2 = o(\log d)$, all of the hypotheses Lemma \ref{lem:partition} are satisfied, and we can apply the lemma to $\mathcal H$.

By applying Lemma \ref{lem:partition} and using the induction hypothesis, we see that 
\[\ell_{\mathcal H}(d) \leq (1 + o(1)) \frac{\log d}{\log \log d - 2 \log t} +  \frac{h-2 + o(1)}{1 - 2  \epsilon} \cdot \frac{\log d}{ \log \log d} = \frac{h-1 + o(1)}{1 - 2 \epsilon} \cdot \frac{\log d}{ \log \log d}.\]
 Hence, the theorem is proven.
\end{proof}
In order to use Theorem \ref{thm:td} to prove that $\ell_{\mathcal S}(d) \leq (1 + o(1)) \frac{\log d}{\log \log d}$, we observe that if $G$ is a star forest, then every component of $G$ has treedepth at most $2$, so we can apply Theorem \ref{thm:td} with $ h = 2$ and obtain the upper bound.

\section{Conclusion}
By combining (\ref{eqn:ejc}) and Theorem \ref{thm:LB}, we obtain the following inequality:
\[
(1 + o(1)) \frac{\log d}{\log \log d} \leq m_{\mathcal S}(d) \leq m_{\mathcal F}(d) \leq \ell_{\mathcal F}(d) \leq (1 + o(1))\log_{4/3} d.
\]
While this inequality is certainly much tighter than (\ref{eqn:ejc}), the correct asymptotic growth rates for $m_{\mathcal F}(d)$ and $\ell_{\mathcal F}(d)$ remain open. While we do not have a conjecture for the correct growth rates of these quantities, we remark that if $m_{\mathcal F}(d) = \Theta(\log d)$, then Theorem \ref{thm:forbidden} gives a strong necessary condition for forest families that demonstrate this growth rate. Namely, suppose that $\{\mathcal G_d\}_{d \geq 1}$ is a sequence of forest families such that $|\mathcal G_d| = \Theta (\log d)$, the forests of $\mathcal G_d$ have maximum degree at most $d$, and $\mathcal G_d$ has no cooperative coloring. Then, Theorem \ref{thm:forbidden} implies that for each finite tree $T$, $T$ must appear as a subgraph of infinitely many forests from the families in $\{\mathcal G_d\}_{d \geq 1}$.

\raggedright
\bibliographystyle{plain}
\bibliography{CCbib}

\end{document}